	\newaliascnt{lemma}{thm}
	\newaliascnt{prop}{thm}
	\newtheorem{prop}[prop]{Proposition} 
	\newaliascnt{defn}{thm}
	\newaliascnt{cor}{thm}
	\newaliascnt{rem}{thm}
	\newtheorem{rem}[rem]{Remark}
	\newaliascnt{exm}{thm}
	\newtheorem{exm}[exm]{Example}
	\newcommand{\K}{\mathbb{K}}
	\newcommand{\Ps}{\mathbb{P}}
	\def\cocoa{\href{http://cocoa.dima.unige.it}{\hbox{\rm C\kern-.13em o\kern-.07em C\kern-.13em o\kern-.15em A}}}
\begin{document}
\title[A remark on Waring decompositions of some special plane quartics]{A remark on Waring decompositions\\of some special plane quartics}
\author{Alessandro De Paris}
\date{}
\begin{abstract}
This work concerns Waring decompositions of a certain kind of plane quartics of high rank. The main result is the following. Let $x,l_1,\ldots,l_7$ be linear forms and $q$ a quadratic form on a vector space of dimension $3$. If $x^2q=l_1^4+\cdots +l_7^4$ and the lines $l_1=0$, $\ldots$, $l_7=0$ in $\Ps^2$ intersect $x=0$ in seven distinct points, then the line $x=0$ is (possibly improperly) tangent to the conic $q=0$.
\end{abstract}
\maketitle

\section{Introduction}
A \emph{Waring decomposition} of a homogeneous polynomial $f\in\K[x_0,\ldots ,x_n]$ of degree $d$ is given by a sum of $d$-th powers of linear forms:
\[
f=l_1^d+\cdots l_r^d\;.
\]
The minimum number $r$ of terms for such a decomposition is called the \emph{Waring rank}, or \emph{symmetric tensor rank}, of $f$. In this paper we shall refer to it simply as \emph{the rank of $f$}. The so-called \emph{little Waring problem} for polynomials asks for the maximum possible rank, given $d$ and $n$. We refer the reader to \cite{G} for a friendly exposition about this problem, and to \cite{L} for an extensive and up-to-date survey on the role of tensor rank theory in a broad range of applications. The latter will also be our default reference for basic terminology.

When $n=1$ the answer is known: a detailed description is given by the Comas-Seiguer theorem (see \cite[Theorem~9.2.2.1]{L}). In the case $(d,n)=(3,2)$ the maximal rank is five (see \cite[Section~8]{LT}). A careful account on rank stratification in the case $(d,n)=(4,2)$, is given by \cite[Theorem~44]{BGI} (cf.~also \cite[Theorem~10.9.3.2]{L}), among a considerable amount of other interesting results. However, in what concerns maximal rank there is lack of completeness, and, to our knowledge, the answer for $(d,n)=(4,2)$ remains unknown.

According to \cite[0.2, p.~xv]{L}, `Results [$\ldots$] indicate there is beautiful geometry associated to rank that is only beginning to be discovered'. In such situations, we believe that even to point out some simple phenomena may contribute to the discovery. In this paper we turn our attention to quartics that, in geometric terms, are decomposed in a double line and a conic. They seem to us a good test, because for plane cubics the maximal rank is achieved by a conic together with a tangent line to it. We collect some remarks about the space of all Waring decompositions of such quartics. In view of the above said, we hope that the latter of them (\autoref{Mr}) is sufficiently interesting to be brought to the attention of researchers in the field.

\section{Preliminary remarks}

First of all,
\begin{center}
we fix a field $\K$ of zero characteristic
\end{center}
(for applicative purposes, generally one may assume $\K=\mathbb{R}$ or $\mathbb{C}$). That is a common assumption when one deals with rank of polynomials, maybe because positive characteristics often lead to cumbersome subtleties (for instance, note that if the characteristic is $2$, then $xy$ can not be expressed as a sum of squares of linear forms in $x,y$). However, we shall outline how things go in positive characteristic by making a quick separate remark.

We start with some very elementary considerations, in the usual coordinate settings. We do not assume $\K$ to be algebraically closed: that hypothesis will be needed only if one wants to directly connect the following starting remarks with the little Waring problem, and to provide some general qualitative considerations we are going to make with a more precise meaning.

Let us consider six linear forms in a polynomial ring $\K\left[x_0,x_1,x_2\right]$:
\begin{equation}
\label{l1-6}
l_1:=x_0+h_1x_1+k_1x_2\;,\ldots,\; l_6:=x_0+h_6x_1+k_6x_2\;.
\end{equation}
We seek for a linear combination of $l_1^4,\ldots ,l_6^4$ such that the quartic curve it represents doubly contains the line $x_2=0$:
\begin{equation}\label{G}
\alpha_1l_1^4+\ldots +\alpha_6l_6^4=x_2^2q\left(x_0,x_1,x_2\right)\;,
\end{equation}
with $q$ a quadratic form. A simple count of dimensions indicates that for a generic choice of $l_1,\ldots ,l_6$ one does not expect such a combination to exist: in geometric terms, we are dealing with the intersection of an osculating $5$-space to the $4$-th Veronese surface $S\subset\Ps^{14}$ and a secant $5$-space. But, again by a dimension count, one expects plenty of special choices of $l_1,\ldots ,l_6$ leading to \eqref{G}. This fits into well-known considerations about rank stratifications, in particular: the generic rank for ternary quartics is expected to be $5$, but it is actually $6$, because $(4,3)$ is an exceptional case of the Alexander-Hirschowitz theorem (see \cite[Theorem~3.2.2.4]{L}).

The expectation of a lot of solutions of \eqref{G} could be threatened by the existence of special quartics of higher rank than the generic. More precisely, if the conic represented by $q$ is nondegenerate and tangent to the line $x_2=0$ then the rank of $x_2^2q$ over the algebraic closure of $\K$ is strictly greater than $6$ (see \cite[Theorem 10.9.2.1]{L} or \cite[case (1) in the proof of Theorem~44, (p.~50)]{BGI}). If this was the case \emph{for all} quadratic forms $q\ne 0$, then \eqref{G} would admit no solutions, apart from those with $q=0$. Now, trivially, when $q=x_2^2$ the rank of $x_2^2q$ is one. Even if $q=x_2l$, with $l$ a linear form not proportional to $x_2$, then $qx_2^2$ is of rank $4$, at least over $\K=\mathbb{C}$ (see, e.g., \cite[Proposition~10.9.1.1]{L}). Incidentally, in such cases we do not immediately get a solution of \eqref{G}, because $l_1,\ldots ,l_6$, though generic, are in the form \eqref{l1-6}. The following example gives an explicit solution with $q$ of a more general type instead.

\begin{exm}
Set
\begin{multline*}
l_1=x_0\,,\;l_2=x_0+x_2\,,\;l_3=x_0-x_2\,,\\l_4=x_0+x_1\,,\;l_5=x_0+x_1+x_2\,,\;l_6=x_0+x_1-x_2\;.
\end{multline*}
We have
\begin{equation}
2l_1^4-l_2^4-l_3^4+2l_4^4-l_5^4-l_6^4=-4\left(6 x_0^2 + 6x_0x_1 + 3x_1^2 + x_2^2\right)x_2^2
\end{equation}
and $6 x_0^2 + 6x_0x_1 + 3x_1^2 + x_2^2=0$ is nondegenerate and not tangent to $x_2=0$.
\end{exm}

The above decomposition looks somewhat special, because the set of $h_i$'s reduces to only two values. That is why we find a bit surprising that this is a necessary condition, as we quickly explain now.
\begin{description}
\item[Claim]When the conic $q=0$ is nondegenerate and not tangent to the line $x_2=0$, for every decomposition \eqref{G} the $\alpha_i$'s are all nonzero and the $h_i$'s reduce to only two values.
\end{description}
To see this, let us split $l_i=L_i+k_ix_2$, with $L_i:=x_0+h_ix_1$, think of $L_1,\ldots,L_6$ as fixed and look for appropriate values of $\alpha_1,\ldots ,\alpha_6,k_1,\ldots, k_6$. We find ourselves dealing with a system of equations
\begin{equation}\label{S}
\sum_{i=1}^6\alpha_ih_i^d=0\,,\; 0\le d\le 4\;,\qquad\sum_{i=1}^6\alpha_ik_ih_i^d=0\,,\; 0\le d\le 3\;.
\end{equation}

Suppose that $h_1,\ldots, h_6$ are distinct. The left-hand equations in \eqref{S} involve some Vandermonde determinants, so that we have a solution $\alpha=\left(\alpha_1,\ldots, \alpha_6\right)$, with all nonzero entries $\alpha_i$, which is unique up to a scalar factor. The subsystem with $0\le d\le 3$ admits a two-dimensional space of solutions $\beta=\left(\beta_1,\ldots, \beta_6\right)$. Solutions $k=\left(k_1,\ldots, k_6\right)$ are given by $k_i=\beta_i/\alpha_i$, and therefore they form a two-dimensional space. But since
\begin{equation}\label{L}
\sum_{i=1}^6\alpha_iL_i\left(x_0,x_1\right)^4=0\;,
\end{equation}
for each choice of $k$ such that $l_i=L_i\left(x_0+\varrho_0x_2,x_1+\varrho_1 x_2\right)$ we have that \eqref{G} holds with $q=0$. Hence we also have a two-dimensional space of solutions $k$ for which $q=0$. This shows that no nonzero quartics of type $x_2^2q$ can arise when the $h_i$'s are distinct.

Suppose then that $h_1,\ldots, h_6$ are not distinct. Let us group together the $l_i$'s with the same $h_i$'s, and call $f_1,\ldots,f_s$ ($s\le 5$) the corresponding linear combinations of their fourth powers, with coefficients the $\alpha_i$'s.  Since less than six distinct $L_i^4$'s are linearly independent, in view of \eqref{L} we have that each $f_j$ contains $x_2$ as a factor, and it vanishes if it consists of only one term as a linear combination $\alpha_il_i^4$  (simply because $\alpha_i$ must vanish). Then, dividing by $x_2$ each nonzero $f_j$, we get $g_1,\ldots ,g_t$ such that
\begin{equation}\label{CG}
x_2q=g_1+\ldots+g_t
\end{equation}
and $1\le t\le 3$. Each $g_j$ must represent a cone (union of lines over the algebraic closure of $\K$) with vertex $V_j$ of the form $(h_i,-1,0)$ for some $i$ depending on $j$. Moreover, each $g_j$ either contains the line $x_2=0$, or intersects it into $3V_j$. Since the $V_j$'s are distinct, in view of \eqref{CG} we deduce that each $g_j$ contains $x_2$ as a factor. But when $f_j$ is a linear combination of only two of the $l_i$'s, then it can not contain $x_2^2$ as a factor (from an algebrogeometric viewpoint, that is an elementary fact about linear series on a line; it may also be deduced from \cite[Theorem~9.2.1.4]{L}). This proves that $t\le 2$. On the other hand, it can not be $t=1$ because $q$ does not represent a cone. Moreover, each one of the two nonzero $f_j$'s must be a linear combination of $3$ (and not less) of the $l_i^4$'s, and this immediately leads to the claim.

Following from the above line of thought, we turn our attention to a quartic $x_2^2q$, with $q$ representing a nondegenerate conic tangent to the line $x_2=0$. As mentioned before, it is known that such a quartic is of rank $7$, at least when $\K$ is algebraically closed. By some trial calculations, one can find $\alpha_1,\ldots ,\alpha_7$ and $l_1,\ldots, l_7$ such that $\alpha_1l_1^4+\ldots +\alpha_7l_7^4$ gives a quartic of that type, where $l_7=x_0+h_7x_1+k_7x_2$, similarly to the preceding $l_1,\ldots ,l_6$.

What is new in this case is that $h_1,\ldots ,h_7$ can be distinct. In our opinion, it would be reasonable to expect that a linear combination of $l_1^4,\ldots ,l_7^4$ that gives $x_2^2q$ should generically give a conic not tangent to the line $x_2=0$, and only in special cases a tangent one. The main result we are going to prove asserts that this expectation fails.

Before going into the proof, we want to briefly discuss some computational aspects. Put into elementary terms as before, our result reduces to the following assertion. If the $h_i$'s are distinct and $k_i$, $\alpha_i$ such that
\begin{equation}\label{FS}
\sum_{i=1}^7\alpha_ih_i^d=0\;,\; 0\le d\le 4\;,\qquad\sum_{i=1}^7\alpha_ik_ih_i^d=0\,,\; 0\le d\le 3\;,
\end{equation}
then we have
\[
\left(\sum_{i=1}^7\alpha_ik_i^2h_i\right)^2-\left(\sum_{i=1}^7\alpha_ik_i^2\right)\left(\sum_{i=1}^7\alpha_ik_i^2h_i^2\right)=0\;.
\]
In principle, this can be proved by a brute-force calculation. Indeed, let $f$ be the above polynomial in $h_i$, $k_i$, $\alpha_i$, let $g:=\prod_{j>i}\left(h_i-h_j\right)$
and $\mathfrak{a}$ the ideal generated by all polynomials in \eqref{FS}. Then it would suffice to check that $fg\in\sqrt\mathfrak{a}$. We tried to perform this checking with \cocoa\ (see \cite{C}) on a common computer. But, even with the simpler (sufficient) condition $fg\in\mathfrak{a}$, and even with some of the indeterminates specialized, the calculation was out of reach. Only some tests with many specializations ended up (with a positive answer).

\section{The main result}

The symmetric algebra $S^\bullet V$ of a $\K$-vector space $V$ will be denoted by $S_V$. The projective space $\Ps V$ will simply be the set of proportionality classes of nonzero vectors in $V$. An $f\in S_{V^\ast}$ will be interpreted, as usual, as a polynomial function on $V$. Dually, elements of $S_V$ are interpreted as polynomial functions on $V^\ast$, and we find it comfortable to denote the value of $s\in S_V$ on $x\in V^\ast$ by
\[
x(s)
\]
(for instance, with $v\in V$, we allow ourselves to say that $x\left(v^n\right)=x(v)^n=x^n(v)$).

To speed up calculations, we assume the following reasonable conventions. When dealing with $n$-tuples of polynomials, say $f=\left(f_1,\ldots ,f_n\right)$, $g=\left(g_1,\ldots ,g_n\right)$, we multiply them by the rule
\[
fg=\left(f_1g_1,\ldots,f_ng_n\right)
\]
(Hadamard product): it is nothing but the multiplication in the ordinary cartesian product ring $\left(S_{V^\ast}\right)^n$ (or $\left(S_V\right)^n$). We shall also make use of the standard bilinear form $\left(S_{V^\ast}\right)^n\times\left(S_{V^\ast}\right)^n\to S_{V^\ast}$:
\[
f\cdot g=f_1g_1+\cdots +f_ng_n\;,
\]
for which we shall keep the dot notation. Note that $fg\cdot h=f\cdot gh$. These operations can be performed, in particular, on elements of $\K^n\subseteq\left(S_{V^\ast}\right)^n$. Since we are considering $\K^n$ also as a ring, sometimes $1$ will stand for the identity element in this ring, i.e., $(1,\ldots,1)$. This way, a Waring decomposition of $f\in S^dV^\ast$ may be written
\[
f=1\cdot l^d\;,
\]
with $l\in\left(V^\ast\right)^n\subset\left(S_{V^\ast}\right)^n$ being an $n$-tuple of linear forms.

Let us also recall the notation $\overline{f}$ for the polarization of $f\in S^dV^\ast$, i.e., the symmetric $d$-multilinear form such that
\[
f(v)=\overline{f}(v,\ldots ,v)
\]
($d!\overline{f}$ may be regarded as the image of $f$ through a canonical map $S^dV^\ast\to\left(S^dV\right)^\ast$).
Moreover, \emph{partial polarizations of $f$},
\[
f_{\delta,d-\delta}:S^{\delta}V\to S^{d-\delta} V^\ast
\]
are also defined (see \cite[2.6.6]{L}; cf.~also \cite[Definition 17]{BGI}).

\begin{rem}\label{Pp}
Let $W$ be a $\K$-vector space and $\ell\in W^\ast$. The partial polarization \[\ell^{d+\delta}_{d,\delta}:S^dW\to S^\delta W^\ast\] of $\ell^{d+\delta}\in S^{d+\delta}W^\ast$ can be described by
\[
s\;\overset{\ell^{d+\delta}_{d,\delta}(t)}{\mapsto}\;\ell(ts)=\ell(t)\ell(s)\;,\qquad\forall t\in S^dW\;.
\]
Since partial polarization $f\mapsto f_{d,\delta}$ is a $\K$-linear procedure, if $L\in\left(W^\ast\right)^n$ is an $n$-tuple of linear forms, then for all $a\in\K^n$, the partial polarization $S^dW\to S^\delta W^\ast$ of $a\cdot L^{d+\delta}$ acts as follows:
\[
s\mapsto a\cdot L(t)L\left(s\right)=aL(t)\cdot L\left(s\right)\;,\qquad\forall t\in S^d W\;.
\]
\end{rem}

\begin{prop}\label{Mr}
Let $V$ be a $\K$-vector space, $\dim V=3$, and let $x\in V^\ast$, $q\in S^2V^\ast$, $x,q\ne 0$. If \[x^2q=l_1^4+\cdots +l_7^4\] with $l_1,\ldots,l_7\in V^\ast$ such that the lines $l_1=0$, $\ldots$, $l_7=0$ in $\Ps V$ intersect $x=0$ in seven distinct points, then the line $x=0$ is (possibly improperly) tangent to the conic $q=0$.
\end{prop}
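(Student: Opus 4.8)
The plan is to invoke the reduction to the elementary identity recorded above — the one asserting that \eqref{FS} forces $\bigl(\sum_i\alpha_ik_i^2h_i\bigr)^2=\bigl(\sum_i\alpha_ik_i^2\bigr)\bigl(\sum_i\alpha_ik_i^2h_i^2\bigr)$ — and to establish that identity by means of a partial-fraction description of the two spaces of solutions of \eqref{FS}. Let me first recall where the hypothesis enters the reduction: choosing coordinates $x_0,x_1,x_2$ on $V$ with $x=x_2$ and, since $\K$ is infinite and the seven points cut on the line $x=0$ are distinct, so that every $l_i$ has nonzero $x_0$-coefficient, one may write $x^2q=\sum_{i=1}^7\alpha_i(L_i+k_ix_2)^4$ with $L_i=x_0+h_ix_1$, the $h_i$ pairwise distinct, and all $\alpha_i\ne 0$. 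Expanding in powers of $x_2$ produces the relations \eqref{FS} and gives $q|_{x=0}=6\bigl(S_0x_0^2+2S_1x_0x_1+S_2x_1^2\bigr)$ with $S_d:=\sum_i\alpha_ik_i^2h_i^{\,d}$; since the intersection of the conic $q=0$ with the line $x=0$ is the zero scheme of $q|_{x=0}$, that line is (possibly improperly) tangent to the conic exactly when this binary quadratic is a square or is zero, i.e. when $S_1^2=S_0S_2$.

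The heart of the matter is to prove $S_1^2=S_0S_2$ from \eqref{FS}. Put $P(t):=\prod_{i=1}^7(t-h_i)$ and recall the interpolation identity: for any polynomial $g$ with $\deg g\le 6$, $\sum_i g(h_i)/P'(h_i)$ is the coefficient of $t^6$ in $g$, hence is $0$ as soon as $\deg g\le 5$. Applying this degree by degree, the first half of \eqref{FS} is equivalent to $\alpha_i=A(h_i)/P'(h_i)$ for some polynomial $A$ with $\deg A\le 1$, and the second half to $\alpha_ik_i=B(h_i)/P'(h_i)$ for some $B$ with $\deg B\le 2$; the condition that all $\alpha_i$ be nonzero translates into: $A$ has no root among $h_1,\dots,h_7$. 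Substituting $\alpha_ik_i^2=(\alpha_ik_i)^2/\alpha_i$ yields
\[
S_d=\sum_{i=1}^{7}\frac{B(h_i)^2\,h_i^{\,d}}{A(h_i)\,P'(h_i)}\qquad(d=0,1,2).
\]

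I would then finish by a short computation, according to $\deg A$. If $\deg A=0$, then $S_d$ is a nonzero scalar multiple of the coefficient of $t^6$ in $B(t)^2t^d$, which vanishes for $d=0,1$ because $\deg(B^2t^d)\le 5$; hence $S_1^2=0=S_0S_2$. If $\deg A=1$, write $A(t)=\lambda(t-t_0)$ with $\lambda\ne 0$ and $t_0\notin\{h_1,\dots,h_7\}$, and carry out the division $B(t)^2t^d=(t-t_0)Q_d(t)+B(t_0)^2t_0^{\,d}$, where $\deg Q_d\le 3+d\le 5$ for $d\le 2$. This gives $S_d=\tfrac1\lambda\sum_iQ_d(h_i)/P'(h_i)+\tfrac{B(t_0)^2t_0^{\,d}}{\lambda}\sum_i\bigl((h_i-t_0)P'(h_i)\bigr)^{-1}$; the first sum is the coefficient of $t^6$ in $Q_d$, hence $0$, while the second sum is $-1/P(t_0)$, obtained by evaluating the partial-fraction expansion $1/P(t)=\sum_i\tfrac{1/P'(h_i)}{t-h_i}$ at $t=t_0$. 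Therefore $S_d=-B(t_0)^2t_0^{\,d}/\bigl(\lambda P(t_0)\bigr)$ for $d=0,1,2$, i.e. $S_d=\kappa\,t_0^{\,d}$ with $\kappa:=-B(t_0)^2/\bigl(\lambda P(t_0)\bigr)$, whence $S_1^2=\kappa^2t_0^2=S_0S_2$, which completes the proof.

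The step I expect to be the real obstacle is finding this argument at all. A direct attack on the identity — membership of $fg$ in the radical of the ideal generated by the polynomials in \eqref{FS}, as discussed above — is computationally hopeless. What unlocks it is the substitution $\alpha_i=A(h_i)/P'(h_i)$, $\alpha_ik_i=B(h_i)/P'(h_i)$, which turns the obstructive factor $1/\alpha_i$ into $1/A(h_i)$; after that $S_0,S_1,S_2$ are (up to sign) the residue of $B(t)^2t^d/\bigl(A(t)P(t)\bigr)$ at the root of $A$, and their proportionality to $1,t_0,t_0^2$ — equivalently, the fact that $\sum_i\alpha_ik_i^2L_i^2$ is a perfect square — is then immediate.
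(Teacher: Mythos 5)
Your argument is correct, and it takes a genuinely different route from the paper's. You carry out in full the reduction sketched in Section 2 — normalizing to $x^2q=\sum\alpha_i(L_i+k_ix_2)^4$, extracting the relations \eqref{FS}, and observing that tangency is exactly the vanishing of the discriminant $S_1^2-S_0S_2$ of $q\restriction_{x=0}$ — and then you prove the resulting polynomial identity by hand, whereas the paper explicitly reports that a direct (radical) ideal-membership verification of that same identity was computationally out of reach and therefore proceeds differently. Your key move is the Lagrange-interpolation parametrization $\alpha_i=A(h_i)/P'(h_i)$, $\alpha_ik_i=B(h_i)/P'(h_i)$ with $\deg A\le 1$, $\deg B\le 2$, which turns each $S_d$ into (minus) the residue of $B(t)^2t^d/\bigl(A(t)P(t)\bigr)$ at the root $t_0$ of $A$, giving $S_d=\kappa t_0^{\,d}$ and hence $S_1^2=S_0S_2$; the paper instead works coordinate-freely with the evaluation maps $\varphi_d$ and the module maps $\psi_{a,\delta,d}$ (the Hartshorne--Rao module of the seven points), writes the generator of $\ker\varphi_5$ as $1/L(w)$, and shows directly that $w\in\ker\overline{Q}$. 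The two proofs are secretly locating the same point: since $\alpha_iP'(h_i)=A(h_i)$ is linear in $h_i$, the paper's $L_i(w)$ is proportional to your $A(h_i)$, so $[w]$ is dual to your root $t_0$ (and your case $\deg A=0$ is the case where $w$ is the vector annihilated by $x_1$). What each buys: the paper's version is intrinsic, exhibits the tangency point without coordinates, and sets up the cohomological interpretation suggested at the end; yours is elementary and self-contained, and in particular settles by a short residue computation the identity that the paper leaves as a computationally hopeless brute-force check. The only points you leave implicit are routine: the choice of $x_0$ avoiding the seven points (possible since $\K$ is infinite), and the fact that the equivalence ``\eqref{FS} $\Leftrightarrow\deg A\le1$, $\deg B\le2$'' requires killing the top coefficients of the interpolants one at a time (the identity $\sum_ig(h_i)/P'(h_i)=$ coefficient of $t^6$ in $g$ applies only once $\deg g\le 6$ is known), which is exactly the ``degree by degree'' induction you indicate.
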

\begin{proof}
Let $W:=x^\perp=\ker x\subseteq V$ and let us use capital letters for restrictions to~$W$:
\[
L_1:=l_1\restriction_W,\ldots ,\;L_7:=l_7\restriction_W\;.
\]
Set also $l:=\left(l_1,\ldots ,l_7\right)\in\left(V^\ast\right)^7\subset\left(S_{V^\ast}\right)^7$ and, similarly, $L:=\left(L_1,\ldots ,L_7\right)\in\left(S_{W^\ast}\right)^7$. By the hypothesis on the intersections with $\Ps W:x=0$ we have that $\left[L_1\right],\ldots ,\left[L_7\right]$ are distinct in $\Ps W^\ast$. This easily implies that $L_1^d,\ldots, L_7^d$ span $S^dW^\ast$ when $d\le 6$, so that in this case the linear map
\[
\begin{array}{cccc}
\varphi_d:&\K^7&\to&S^dW^\ast\\
&a&\mapsto& a\cdot L^d
\end{array}
\]
is surjective. Therefore we have
\[
\dim\ker\varphi_d=6-d\;,\qquad d=0,\ldots ,6\;.
\]
By \autoref{Pp}, and taking into account that $L\left(w^d\right)=L^d(w)$ for all $w\in W$, we have that if $a\in\ker\varphi_{d+\delta}$ and $t\in S^\delta W$ then $aL(t)\in\ker\varphi_d$. Thus, whenever $a\in\ker\varphi_{d+\delta}$, we can define a linear map
\[
\begin{array}{ccccl}
\psi_{a,\delta,d}:&S^\delta W&\to&\ker\varphi_d&\\
&t&\mapsto& aL(t)
\end{array}\;.
\]
Again because of the hypothesis on the intersections we have:
\begin{itemize}
\item a generator for $\ker\varphi_5$ is invertible in $\K^7$ (i.e., all of its components are nonzero);
\item with an invertible $a\in\K^7$ and $\delta\le 6$, the map $\psi_{a,\delta,d}$ is injective.
\end{itemize}
In particular, if $a$ is a generator for $\ker\varphi_5$ then the map $\psi_{a,1,4}$ is injective. It is henceforth surjective, because $\dim W=\dim\ker\varphi_4=2$. But since $1\cdot l^4=x^2q$, we have that $1\in\ker\varphi_4$. This way we end up with a $w\in W$ such that
\[
a=\frac1{L(w)}
\]
generates $\ker\varphi_5$.

Now let us pick $v\in V$ such that $x(v)=1$ and set $l_v:=l-l(v)x$, where
\[
l(v)x:=\left(l_1(v)x,\ldots ,l_7(v)x\right)\in\left(V^\ast\right)^7\subset\left(S_{V^\ast}\right)^7
\]
(in other terms, we are writing to the right the $S_{V^\ast}$--module multiplication in $\left(S_{V^\ast}\right)^7$, to avoid ambiguities due to the evaluation at $v$). Clearly $l_v(v)=0$ and the restriction of $l_v$ to $W$ is again $L$. From
\begin{equation}\label{Mf}
1\cdot\left(l_v+l(v)x\right)^4=1\cdot l^4=x^2q
\end{equation}
we deduce that
\[
1\cdot l_v^4+4\left(l(v)\cdot l_v^3\right)x\in x^2S^2V^\ast\subset S^4V^\ast\;.
\]
But from $1\cdot L^4=0$, $l_v(v)=0$ easily follows $1\cdot l_v^4=0$, hence $l(v)\cdot l_v^3$ is divisible by~$x$. Therefore
\[
l(v)\cdot L^3=0\;,
\]
that is, $l(v)\in\ker\varphi_3$. Since the injective linear map $\psi_{1/L(w),2,3}:S^2W\to\ker\varphi_3$ is also surjective by dimension reasons, we end up with a $b\in S^2W$ such that
\[
l(v)=\frac{L(b)}{L(w)}\;.
\]
With $Q:=q\restriction_W$ and $\overline{Q}$ its polarization, \eqref{Mf} implies
\[
Q=6l(v)^2\cdot L^2=6\frac{L(b^2)}{L(w)^2}\cdot L^2\;.
\]
Therefore, for all $u\in W$ we have
\[
\overline{Q}\left(w,u\right)=6\frac{L(b^2)}{L(w)}\cdot L(u)=0\;,
\]
since $L(b^2)/L(w)=\psi_{1/L(w),4,1}\left(b^2\right)\in\ker\varphi_1$. In other words, $w$ is in the kernel of the polarization $\overline{Q}$, which exactly means that $q=0$ and $x=0$ are tangent at~$[w]$.
\end{proof}

\begin{rem}
For fields of characteristic $\ge 7$ the proof basically works without changes (one has only to be careful with definitions about symmetric powers and polarizations). In characteristic $5$, $\ker\varphi_5$ fails to be one-dimensional, but we can still find an invertible element in it. In characteristic $2$ or $3$ the result is rather trivial, because $q$ becomes divisible by $x$.
\end{rem}

Though we have chosen quite a direct algebraic language, we prefer not to miss hinting at a more geometric interpretation, in view of possible generalizations. We refer readers not acquainted with algebraic geometry to \cite[Part~4]{L} for some good examples of similar techniques. In notation of the proof of \autoref{Mr}, let $Y:=\left\{\left[L_1\right],\ldots,\left[L_7\right]\right\}\subseteq\Ps W^\ast$. The maps $\varphi_d$ are dual to the restriction maps
\[
H^0\left(\mathcal{O}(d)\right)\to H^0\left(\mathcal{O}_Y(d)\right)\;,
\]
and hence their kernels are dual to $H^1\left(\mathcal{I}_Y(d)\right)$ (one might also note that they are naturally isomorphic to $\operatorname{Hom}(\mathcal{I}_Y(d),\omega)$, by Serre duality). The direct sum over $d$ of the $H^1\left(\mathcal{I}_Y(d)\right)$'s is known as the \emph{Hartshorne-Rao module of $Y$}, and the maps $\psi_{a,\delta,d}$ are byproducts of that module structure.

\bibliographystyle{plain}

\end{document}